\title[   ]{ Fixed point properties and  $Q$-nonexpansive retractions   in   locally convex spaces}
 \author{ Sompong Dhompongsa$^{1}$,  Poom Kumam$^{1,2,*}$ and Ebrahim  Soori$^{3}$}
 \thanks{ *The corresponding author: poom.kum@kmutt.ac.th  (PK)
  \\ E-mail addresses: sori.e@lu.ac.ir, sori.ebrahim@yahoo.com (ES), sompong.dho@kmutt.ac.th  (SD)  }
\theoremstyle{plain}
\newtheorem{lem}{\textbf{Lemma}}[section]
\newtheorem{thm}[lem]{\textbf{Theorem}}
\newtheorem{co}[lem]{\textbf{Corollary}}
\theoremstyle{definition}
\theoremstyle{definition}
\theoremstyle{remark}
\renewcommand{\sc}{\mathcal{S}}
\newcommand{\ud}{\,\mathrm{d}}
\begin{document}
\begin{large}

\maketitle
 \begin{center}
 \begin{normalsize}
$^{1}$KMUTT-Fixed Point Theory and Applications Research Group, Theoretical and Computational Science Center (TaCS), Science Laboratory Building, Faculty of Science, King Mongkut's University of Technology Thonburi (KMUTT), \\ 126 Pracha-Uthit Road, Bang Mod, Thung Khru, Bangkok 10140 Thailand.\\
$^{2}$Department of Medical Research, China Medical University Hospital, China Medical University, Taichung 40402, Taiwan\\  $^{3}$Department  of Mathematics, Lorestan University,  P. O. Box 465, Khoramabad, Lorestan, Iran.\\

 \end{normalsize}
 \end{center}
\begin{abstract}

\begin{normalsize}
 Suppose that  $Q$ is a   family of   seminorms on a        locally
convex space $E$ which determines the topology of $E$. We study  the existence of $Q$-nonexpansive retractions  for   families of $Q$-nonexpansive mappings and  prove that     a   separated and sequentially
complete    locally
convex space $E$  that has the weak  fixed point property for
commuting separable semitopological semigroups of $Q$-nonexpansive mappings.  This       proves   the Bruck's problem  \cite{bruckac}     for  locally convex spaces.      Moreover,  we prove  the existence of $Q$-nonexpansive retractions  for  the right
amenable $Q$-nonexpansive semigroups.
\end{normalsize}
\end{abstract}
\begin{normalsize}
   \textbf{keywords}: Weak fixed point property; $Q$-Nonexpansive mapping;   Right amenable   semigroup;  Seminorm; Retraction.
   \end{normalsize}

\section{ Introduction}
The first nonlinear ergodic theorem for nonexpansive mappings in a Hilbert space was established by Baillon \cite{bia}: Let $C$
be a nonempty closed convex subset of a Hilbert space $H$ and let $T$ be a nonexpansive mapping of $C$ into itself. If the set
$\text{Fix}(T)$ of fixed points of $T$ is nonempty, then for each $x \in C$, the Cesaro means
$
S_{n}x=\frac{1}{n}\sum_{k=1}^{n}T^{k}x
$
converge weakly to some $y \in \text{Fix}(T)$. In Baillon's theorem, putting $y = Px$ for each $x \in C$, $P$ is a nonexpansive retraction of $C$
onto $\text{Fix}(T)$ such that $PT^{n} = T ^{n} P = P$ for all positive integers $n$ and $Px \in \overline{co}\{T ^{n}x: n = 1, 2, . . .\}$ for each $x \in C$. Takahashi \cite{tak1}
proved the existence of such retractions, “ergodic retractions”, for non-commutative semigroups of nonexpansive mappings
in a Hilbert space: If $S$ is an amenable semigroup, $C$ is a closed, convex subset of a Hilbert space $H$ and $\sc= \{T_{s}: s \in S\}$
is a nonexpansive semigroup on $C$ such that $\text{Fix}(\sc)\neq \emptyset$, then there exists a nonexpansive retraction $P$ from $C$ onto $\text{Fix}(\sc)$
such that $PT_{t} = T _{t} P = P$ for each $t \in S$  and $Px \in \overline{co}\,\{T_{t}x:t\in S\}$    for each $x \in C$. These results were extended to uniformly
convex Banach spaces for commutative semigroups in \cite{hi}  and for amenable semigroups in \cite{lst, lnt}. For
some related results, we refer the readers to the works in \cite{said11, said, said2012, said33}. In this paper,   we find some  ergodic retractions  for  locally convex spaces.

Bruck proved in  \cite{bruckac} a Banach space $E$ has the weak fixed point property
for commuting semigroups if it has the weak fixed point property.  In this research,  we prove this problem for  locally
convex spaces.

Let $f$ be a  function of semigroup $S$ into a reflexive Banach space $E$ such that the weak closure of $\{f(t):t\in S\}$ is weakly
compact and let $X$ be a subspace of $B(S)$ containing all the functions $t \rightarrow \left<f(t),x^{\ast}\right>$ with  $ x^{\ast} \in E^{\ast}  $. We know from \cite{hi} that for any $ \mu\in X^{\ast}$, there
exists a unique element $ f_{\mu}$ in $E$ such that $\left<f_{\mu},x^{\ast}\right>=\mu_{t}\left<f(t),x^{\ast}\right>$
for all $ x^{\ast}\in E^{\ast} $. we denote such $f_{\mu}$ by $\int\!f(t)\ud \mu(t)$. Moreover, if $ \mu$ is a mean on $X$ then from \cite{ki}, $\int\! f(t)\ud \mu(t) \in \overline{\rm{co}}\,\{f(t):t\in S\}$. In this paper, we find such  function $ f_{\mu}$  for  locally
convex spaces.
\section{preliminaries }
  The   space of all bounded real-valued functions defined on a semigroup $S $ with supremum
norm is denoted by  $B(S)$.   $l_{t}$ and  $r_{t}$ in $B(S)$  are defined  as follows:  $  (l_{t}g )(s) = g (ts)$ and   $(r_{t}g )(s) = g (st)$,  for all $ s \in S$,  $ t \in S$  and $g\in B(S)$.\\
Suppose that $ X$ is a subspace of $B(S)$ containing 1 and let $ X^{*}$ be its topological  dual space. An element $m $ of $X^{*} $ is said to be a mean on $X$, provided  $\|m\|=m(1)=1$.  For $ m\in X^{*}$ and $g \in X$,   $m_{t}(g(t))$ is often  written instead of $ m(g )$. Suppose that $ X $ is left invariant (respectively, right
invariant), i.e., $ l_{t}(X) \subset X$ (respectively, $ r_{t}(X) \subset X$) for each $ s \in S$. A mean $m$ on $X$ is called    left invariant (respectively, right invariant), provided $m(l_{t}g ) =m(g )$ (respectively, $m(r_{t}g)= m(g )$) for each $t \in S $ and $g \in X$. $X $ is    called      left (respectively, right) amenable if $X$ possesses a left (respectively, right) invariant mean. $X$ is amenable, provided $X$ is both left and right amenable.


Recall the following definitions:
\begin{enumerate}
\item A semitopological semigroup is a semigroup $S$ with a Hausdorff topology such that for
each $a \in S$ the mappings $s \rightarrow a.s$ and $s \rightarrow s.a$ from $S$ to $S$ are continuous. For example, $(\mathbb{R} , +)$ with the usual topology on $\mathbb{R}$ and   any semigroup  with the discrete
topology are semitopological semigroups,
\item Suppose that  $Q$ is a   family of   seminorms on a        locally
convex space $E$ which determines the topology of $E$ and  $ C \subset E $.    A mapping $T : C  \rightarrow C$ is said to be
$Q$-nonexpansive provided  the following
inequality holds:
 \begin{equation*}
     q(Tx-Ty) \leq q(x-y),
    \end{equation*}
    for all $x, y \in C$ and  $q \in Q$,

\item
Let $C$ be a nonempty closed and convex subset of $E$. Then, a family   $ \sc=\{T_{s}:s\in S\} $ of mappings from $C$ into itself is said to be a  representation of $S$ as $Q$-nonexpansive mapping on $C$ into itself if $ \sc$ satisfies the following properties:
\begin{enumerate}
  \item [(1)] $T_{st}x=T_{s}T_{t}x$ for all $s, t \in S$ and $x\in C$;
  \item [(2)] for every $s \in S$ the  mapping $T_{s}: C \rightarrow C$ is $Q$-nonexpansive.
\end{enumerate}
 We denote by Fix($ \sc$)
the set of common fixed points of $\sc$, that is \\ Fix($ \sc$)=$\bigcap_{ s\in S}\{x\in C: T_{s}x=x  \}$,
\item   Let  $E$ be a Hausdorff  locally convex space. Then $E$ is
quasi-complete if every bounded Cauchy net is convergent. Observe that \\
 complete $\Rightarrow$   quasi-complete $\Rightarrow$ \text{sequentially complete},

    \item Let   $Q$ be a   family of   seminorms on a  locally convex space $E$ which determines the topology of $E$ and suppose $E$ is Hausdorff and sequentially complete. Consider a  sequence $\{x_{n}\}$  and a    series $\sum_{n=1}^{\infty}x_{n}$  in $E$. A series
  is called  $Q$-absolutely convergent if   $\sum_{n=1}^{\infty}q(x_{n}) < \infty$  for each  $q \in Q$,
\item  Suppose that  $Q$ is a   family of   seminorms on a      locally convex space $E$ which determines the topology of $E$ and  $ C  $   a closed, convex subset of   $E$.   We say that $E$ has the weakly fixed
point property if, for every nonempty  weakly compact convex subset $C$ of $E$, every
$Q$-nonexpansive mapping of $C$ into itself has a fixed point,

\item Let $ S $ be a semigroup and  $ C $ be a closed, convex subset of a
  locally convex space $E$. We call a representation    $ \sc=\{T_{s}:s\in S\} $   an  $Q$-nonexpansive representation      if
 \begin{equation*}
     q(T_{t}x-T_{t}y) \leq q(x-y),
    \end{equation*}
 for all   $x,y \in C$ and $t \in S $.
 \item Let $C$ be a nonempty subset of a topological space $X$ and $D$ a nonempty
subset of $C$. Then a continuous mapping $P:C \rightarrow D$ is said to be a retraction
if $Px = x$ for all $x \in D$, i.e., $P^{2} = P$. In such case, $D$ is said to be a retract
of $C$.
\item  Suppose that  $Q$ is a   family of   seminorms on a        locally
convex space $E$ which determines the topology of $E$.  Then  $E$ is separated if and only if   $Q$
  possesses the following property:\\
for every $x \in X  \setminus \{0\}$ there is a seminorm  $q \in Q$ such that $q(x) \neq 0$.
\item
 Suppose that  $Q$ is a   family of   seminorms on a        locally
convex space $X$ which determines the topology of $X$ and $q \in Q$.   From page 3 in \cite{coop}, a linear functional $f: X \rightarrow \mathbb{ R}$ is continues if there are a constant $M \geq 0$ and $q_{1}, \ldots, q_{n} \in Q$ such that $|f(x)| \leq M \displaystyle \max _{1\leq i \leq n} q_{i}(x)$  for all $x \in X$,
 \item let $ S $ be a semigroup.  Suppose that  $Q$ is a   family of   seminorms on a        locally
convex space $E$ which determines the topology of $E$  and  let $ C $ be a closed, convex subset of a
  locally convex space $  E$ and      $ \sc=\{T_{s}:s\in S\} $ a  $Q$-nonexpansive representation  on   $C$. A point $a\in E$ is an attractive point of $\sc$ if $q(a-T_{s}x) \leq q(a-x)$ for all $x \in C$, $s \in S$ and $q \in Q$.
  \item Let $X$ be a vector space over $\mathbb{R}$ or $\mathbb{C}$. A subset $B \subseteq X$ is
balanced if for all  $x \in B$ and  $\alpha$ in the base field such that  $|\alpha | \leq 1$, we have $\alpha x \in B$.
\end{enumerate}
\section{\textbf{Some applications of Hahn Banach theorem in Locally convex spaces}}

Let $Y$ be a    subset  of a  locally
convex space $X$, we put   $q^{*}_{Y}(f)=\sup\{|f(y)|: y \in Y, q(y)\leq 1\}$ and      $q^{*}(f)=\sup\{|f(x)|: x \in X, q(x)\leq 1\}$, for every linear function $f$ on $X$. We will make use of the following Theorems.
\begin{thm}\label{hahn1}
  Suppose that  $Q$ is a   family of   seminorms on a   real     locally
convex space $X$ which determines the topology of $X$ and $q \in Q$ is a continuous  seminorm and $Y$ is a  vector subspace of $X$ such that $Y \cap \{x \in X: q(x)=0\}=\{0\}$.     Let $f$ be a real    linear functional on $Y$ such that   $q^{*}_{Y}(f)< \infty$. Then there exists a continuous linear functional $h$ on $X$ that extends $f$ such that $q^{*}_{Y}(f)=q^{*}(h)$.
\end{thm}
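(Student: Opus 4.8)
The plan is to view the statement as a locally convex reformulation of the classical analytic Hahn--Banach theorem, applied with the seminorm $p(x) = q^{*}_{Y}(f)\, q(x)$ on $X$; continuity of the resulting extension is then read off from the characterization of continuous linear functionals recalled in item~(9) of the preliminaries, and the equality of the two dual seminorms follows by a homogeneity (scaling) argument.

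Write $M = q^{*}_{Y}(f)$, which is finite by hypothesis. The first step is to promote the definition of $M$ to a genuine pointwise bound: $|f(y)| \leq M\, q(y)$ for all $y \in Y$. If $q(y) > 0$, then $z = y/q(y)$ lies in the subspace $Y$ and satisfies $q(z) = 1 \leq 1$, hence $|f(z)| \leq M$, i.e. $|f(y)| \leq M\, q(y)$. If $q(y) = 0$, the transversality hypothesis $Y \cap \{x \in X : q(x) = 0\} = \{0\}$ forces $y = 0$, so $f(y) = 0 = M\, q(y)$. (Alternatively, $q(y)=0$ gives $|f(\lambda y)| = \lambda|f(y)| \leq M$ for every $\lambda > 0$, which by itself forces $f(y)=0$; so here $q^{*}_{Y}(f) < \infty$ already does the work.) This reduction to a pointwise estimate is the only step that makes essential use of the hypotheses on $Y$ and $f$, so it is the crux of an otherwise routine argument.

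Now put $p(x) = M\, q(x)$ for $x \in X$. Since $q$ is a seminorm, $p$ is a seminorm, in particular sublinear, and the first step gives $f(y) \leq |f(y)| \leq p(y)$ for every $y \in Y$. By the real Hahn--Banach theorem there is a linear functional $h : X \to \mathbb{R}$ with $h|_{Y} = f$ and $h(x) \leq p(x)$ for all $x \in X$; replacing $x$ by $-x$ and using $p(-x) = p(x)$ yields $|h(x)| \leq M\, q(x)$ for all $x \in X$. Taking $n = 1$, $q_{1} = q \in Q$, and the constant $M \geq 0$ in item~(9) of the preliminaries, $h$ is continuous on $X$.

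Finally, I would verify $q^{*}(h) = q^{*}_{Y}(f)$. If $q(x) \leq 1$, then $|h(x)| \leq M\, q(x) \leq M$, so $q^{*}(h) \leq M = q^{*}_{Y}(f)$. Conversely, since $h$ extends $f$, for every $y \in Y$ with $q(y) \leq 1$ we have $|f(y)| = |h(y)| \leq q^{*}(h)$; taking the supremum over all such $y$ gives $q^{*}_{Y}(f) \leq q^{*}(h)$. Hence $q^{*}(h) = q^{*}_{Y}(f)$, which completes the proof, the degenerate case $M = 0$ being covered automatically (then $f \equiv 0$ on $Y$ and $h = 0$ works).
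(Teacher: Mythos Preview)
Your proof is correct and follows essentially the same route as the paper: define the dominating seminorm $p(x)=q^{*}_{Y}(f)\,q(x)$, use the transversality hypothesis to turn $q^{*}_{Y}(f)<\infty$ into the pointwise bound $|f(y)|\le p(y)$ on $Y$, apply Hahn--Banach, and read off both continuity and the equality $q^{*}(h)=q^{*}_{Y}(f)$ from the resulting estimate $|h(x)|\le q^{*}_{Y}(f)\,q(x)$. The only cosmetic difference is that the paper invokes a Hahn--Banach variant that yields continuity of the extension directly from the continuity of $p$, whereas you deduce it afterwards from item~(9).
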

\begin{proof}
      If we define $p: X\rightarrow \mathbb{R}$ by $p(x)=q^{*}_{Y}(f)q(x)$ for each $x \in X$, then we have $p$ is a seminorm on $X$ such that $f(x)\leq p(x)$, for each $x\in Y$. Because,  if $x = 0$, clearly $f(x)=0$ and $0\leq p(x)$. On the other hand, if $x \in Y$ and $x \neq 0$ then from our assumption,  $q(x)\neq 0$ and  $q(\frac{x}{q(x)})= 1$. Therefore, we have $f(\frac{x}{q(x)}) \leq q^{*}_{Y}(f) $, then   $f(x) \leq q^{*}_{Y}(f)  q(x) = p(x)$. Since  $q $ is   continuous,    $p $ is also a continuous  seminorm, therefore    by the Hahn-Banach theorem (Theorem 3.9 in \cite{sob}), there exists a linear continuous extension $h$ of $f$ to $X$ that $h(x)\leq p(x)$ for each $x \in  X$. Hence, since $X$ is a vector space, we have
  \begin{equation}\label{qq}
    |h(x)|\leq q^{*}_{Y}(f)q(x), (x \in X)
  \end{equation}
   and  hence,  $q^{*}(h) \leq q^{*}_{Y}(f)$. Moreover, since $q^{*}_{Y}(f)=\sup \{|f(y)|: q(y)\leq 1\}\leq \sup \{|h(x)|: q(x)\leq 1\}=q^{*}(h) $, we have  $q^{*}_{Y}(f)=q^{*}(h)$.
\end{proof}
\begin{thm}\label{hahn2}
   Suppose that  $Q$ is a   family of   seminorms on a   real     locally
convex space $X$ which determines the topology of $X$ and  $q \in Q$   a nonzero  continuous seminorm.    Let $x_{0}$ be a point in $X$.  Then there exists  a continuous linear functional on $X$ such that $q^{*}(f)=1$ and  $f(x_{0})=q(x_{0})$.
\end{thm}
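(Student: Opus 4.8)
The plan is to deduce this from Theorem \ref{hahn1} applied to a one-dimensional subspace, after isolating a degenerate case. First I would record the elementary observation that any linear functional $g$ on $X$ with $q^{*}(g) < \infty$ must vanish on the subspace $N := \{x \in X : q(x) = 0\}$: if $q(x) = 0$, then $q(\lambda x) = 0 \le 1$ for every $\lambda > 0$, so $\lambda |g(x)| = |g(\lambda x)| \le q^{*}(g)$, and letting $\lambda \to \infty$ forces $g(x) = 0$. This is the fact that makes the case $q(x_{0}) = 0$ (in particular the case $x_{0} = 0$) cost nothing.

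Case 1: $q(x_{0}) \neq 0$. Put $Y = \mathbb{R}x_{0}$ and define $f_{0} : Y \to \mathbb{R}$ by $f_{0}(t x_{0}) = t\, q(x_{0})$. Since $q(t x_{0}) = |t|\, q(x_{0})$ vanishes only for $t = 0$, we have $Y \cap \{x \in X : q(x) = 0\} = \{0\}$, so the hypotheses of Theorem \ref{hahn1} are satisfied on $Y$. A direct computation gives $q^{*}_{Y}(f_{0}) = \sup\{|t|\,q(x_{0}) : |t|\,q(x_{0}) \le 1\} = 1 < \infty$. Hence Theorem \ref{hahn1} yields a continuous linear functional $h$ on $X$ extending $f_{0}$ with $q^{*}(h) = q^{*}_{Y}(f_{0}) = 1$, and $h(x_{0}) = f_{0}(x_{0}) = q(x_{0})$, which is what is wanted.

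Case 2: $q(x_{0}) = 0$. Since $q$ is nonzero, choose $x_{1} \in X$ with $q(x_{1}) \neq 0$ and run the construction of Case 1 with $x_{1}$ in place of $x_{0}$, obtaining a continuous linear functional $h$ on $X$ with $q^{*}(h) = 1$. By the preliminary observation $h$ vanishes on $N$, so in particular $h(x_{0}) = 0 = q(x_{0})$. Taking $f = h$ finishes this case, and hence the theorem.

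I do not anticipate a real obstacle; the only point requiring attention is the verification of the condition $Y \cap \{x : q(x) = 0\} = \{0\}$ needed to invoke Theorem \ref{hahn1}, which is precisely why the degenerate seminorm value is split off into its own case and dealt with by the vanishing-on-the-kernel remark instead of a one-dimensional extension.
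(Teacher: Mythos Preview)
Your proof is correct. The non-degenerate case $q(x_{0}) \neq 0$ is handled identically to the paper: both put the functional $\alpha x_{0} \mapsto \alpha q(x_{0})$ on the line $\mathbb{R}x_{0}$, check $q^{*}_{Y}(f_{0}) = 1$, and extend via Theorem \ref{hahn1}. The difference is in the degenerate case $q(x_{0}) = 0$. The paper constructs a separate functional from scratch: it fixes $y_{0}$ with $q(y_{0}) \neq 0$, defines $h(y + \alpha y_{0}) = \alpha$ on the span of the kernel $Y := \{q = 0\}$ and $y_{0}$, bounds it via $q(y + \alpha y_{0}) \geq r|\alpha|$, extends by the raw Hahn--Banach theorem (not Theorem \ref{hahn1}), and finally divides by $q^{*}(L)$ to normalize. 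Your route is more economical: you simply recycle Case~1 applied to an auxiliary point $x_{1}$ with $q(x_{1}) \neq 0$, and then invoke the preliminary observation that any functional with finite $q^{*}$ vanishes on $\ker q$, which immediately gives $h(x_{0}) = 0 = q(x_{0})$. This avoids both the ad hoc construction on $Y + \mathbb{R}y_{0}$ and the normalization step; the paper's version, in exchange, does not need your vanishing-on-the-kernel lemma, but that lemma is a one-line consequence of the definition of $q^{*}$.
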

\begin{proof} Let  $Y:=\{y\in X: q(y)=0\} $.  We   consider two cases:\\
Case 1.   Let $x_{0}\in Y$. \\ Since  $q$  is    continuous,         $Y$ is a closed subset of $X$. Indeed, if  $x \in \overline{Y}$ and $x_{\alpha}\in Y$  is a net  such that $x_{\alpha}\rightarrow x$. Then we have $q(x) =\lim  q(x_{\alpha})= 0$, hence $x \in Y$,  then  $Y$ is a closed.  Let  $ y_{0}$ be a point in $   X \setminus Y$. There exists some $r>0$ such that $q(y-y_{0})>r$ for all $y \in Y$. Suppose that $Z=\{y+\alpha y_{0}: \alpha \in \mathbb{R}, y \in Y\}$, the vector subspace generated by $Y$ and $y_{0}$. Then we define $h:Z\rightarrow \mathbb{R}$ by $h(y+\alpha y_{0})=\alpha$. Obviously,  $h$ is   linear and we  have also
\begin{align*}
  r|h(y+\alpha y_{0})|=r|\alpha|< |\alpha|q(\alpha^{-1}y+y_{0})=q(y+\alpha y_{0})
\end{align*}
for all $y \in Y$ and   $\alpha \in \mathbb{R}$. Therefore $h$ is a   linear functional  on $Z$ that $q^{*}_{Z}(h)$  dose not exceed $r^{-1}$. Putting $p=r^{-1}q$,  we have    $p $ is   a continuous  seminorm such that $h(z)\leq p(z)$ for each $z \in Z$, therefore    by the Hahn-Banach theorem (Theorem 3.9 in \cite{sob}), there exists a linear continuous extension $L$ of $h$ to $X$ that $L(x)\leq p(x)$ for each $x \in  X$.
 We have also $L(x_{0})=h(x_{0})=q(x_{0})=0$. Now, since $q^{*}_{Z}(h)\neq 0$, we have also $q^{*}(L)\neq 0$,  we can  define    $f:=\frac{L}{q^{*}(L)} $. Hence, $f$ is a   linear continuous functional  on $Z$ that $f(x_{0})=q(x_{0})=0$ and also $q^{*}(f)=1$.

Case 2. Let $x_{0}\notin Y$.  \\Let $Z:=\{\alpha x_{0}   :\alpha \in \mathbb{R}\}$ that is the vector subspace generated by $x_{0}$. If we define $h(\alpha x_{0} )=\alpha q(x_{0} )$ then $h$ is a   linear functional  on $Z$ that  $h(x_{0})=q(x_{0})$ and also $q^{*}_{Z}(h)=1$.
 Since $Z \cap Y=\{0\} $, from Theorem \ref{hahn1}, there exists a continuous linear extension $f$ of $h$ to $X$ such that $q^{*}(f)=q^{*}_{Z}(h)=1$.  Obviously, $f(x_{0})=q(x_{0})$.
\end{proof}


\section{\textbf{Ergodic retractions for families of $Q$-nonexpansive mappings}}
Let  $Q$ be a   family of   seminorms on a    locally convex space $E$ which determines the topology of $E$. In this section, we study the existence of $Q$-nonexpansive retractions onto the set of common fixed points of a family of
$Q$-nonexpansive mappings that commute with the mappings. A $Q$-nonexpansive retraction that commutes with the mappings is usually called an ergodic retraction.

First, we prove the following theorem  that extends and generalizes Theorem 2.1 in \cite{said2012} which is the main result of this section and  will be essential in the sequel.
\begin{thm}\label{shlp}
Suppose that  $Q$ is a   family of   seminorms on a    locally convex space $E$ which determines the topology of $E$. Let $ C $   be   a  locally weakly compact and  weakly closed convex     subset of $E$.
     Suppose that $\sc = \{T_{i} : i \in I \}$ is a family of $Q$-nonexpansive mappings on $C$ such that
$\text{Fix}(\sc)\neq \emptyset$. Consider the following assumption:
\begin{enumerate}
  \item [(a)] $E$ is separated and for every nonempty  weakly compact convex $\sc $-invariant subset $K$ of $C$,
$K \cap \text{Fix}(\sc) \neq \emptyset$,
  \item [(b)] if $x,y \in C$ and $q(\frac{x+y}{2})=q(x)=q(y)$ for all $q \in Q$, then $x=y$.
\end{enumerate}
Then, for each $i \in I$, there exists a $Q$-nonexpansive retraction $P_{i}$ from $C$ onto
$\text{Fix}(\sc)$, such that $P_{i}T_{i} = T_{i}P_{i} = P_{i}$  and every weakly closed convex $\sc$-invariant
subset of $C$ is also $P_{i}$-invariant.
\end{thm}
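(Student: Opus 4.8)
The plan is to adapt the argument of \cite[Theorem~2.1]{said2012}, replacing the norm by the seminorms $q\in Q$ and using weak compactness of bounded weakly closed convex subsets of $C$ (afforded by local weak compactness) in place of reflexivity. \emph{Reductions.} One may replace $\sc$ by the set $\A$ of all finite compositions of finite convex combinations of members of $\sc\cup\{\mathrm{id}\}$; every element of $\A$ is $Q$-nonexpansive, $\text{Fix}(\A)=\text{Fix}(\sc)$, and a weakly closed convex subset of $C$ is $\A$-invariant if and only if it is $\sc$-invariant. Fix $z_{0}\in\text{Fix}(\sc)$, and for $x\in C$ put $B_{x}=\{y\in C:q(y-z_{0})\le q(x-z_{0})\text{ for all }q\in Q\}$. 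Since $Tz_{0}=z_{0}$ for every $T\in\A$, the set $B_{x}$ is convex, $\A$-invariant and contains $x$; it is bounded (each $q$ is bounded on it by $q(x-z_{0})+q(z_{0})$) and weakly closed (seminorms are weakly lower semicontinuous), hence weakly compact. I will also use that $\text{Fix}(\sc)$ is convex, which follows from (b): if $T_{j}a=a$, $T_{j}b=b$ and $m=\tfrac12(a+b)$, then $q(a-b)\le q(a-T_{j}m)+q(T_{j}m-b)\le q(a-m)+q(m-b)=q(a-b)$, so $q(a-T_{j}m)=q(T_{j}m-b)=\tfrac12 q(a-b)=q\big(\tfrac12((a-T_{j}m)+(T_{j}m-b))\big)$ for every $q$, and (b) forces $a-T_{j}m=T_{j}m-b$, i.e.\ $T_{j}m=m$.

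\emph{Minimal invariant sets are singletons in $\text{Fix}(\sc)$.} By Zorn's lemma every nonempty weakly compact convex $\sc$-invariant subset of $C$ contains a minimal one $M$ (a decreasing chain of such sets has nonempty intersection by compactness, which is again convex and $\sc$-invariant). By (a) choose $p\in M\cap\text{Fix}(\sc)$ and put $r_{q}=\sup\{q(y-p):y\in M\}<\infty$. For $\lambda\in(0,1)$ the set $N=\{y\in M:q(y-p)\le\lambda r_{q}\text{ for all }q\in Q\}$ is convex, weakly closed, nonempty ($p\in N$) and $\sc$-invariant (if $y\in N$ then $q(T_{j}y-p)=q(T_{j}y-T_{j}p)\le q(y-p)\le\lambda r_{q}$), so minimality forces $N=M$; letting $y$ range over $M$ gives $r_{q}\le\lambda r_{q}$, hence $r_{q}=0$ for all $q$, and since $E$ is separated, $M=\{p\}\subseteq\text{Fix}(\sc)$.

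\emph{Construction of $P_{i}$.} Fix $i\in I$ and let $\Phi$ be the family of all $Q$-nonexpansive $R:C\to C$ that fix $\text{Fix}(\sc)$ pointwise, satisfy $RT_{i}=R$, and carry every weakly compact convex $\sc$-invariant subset of $C$ into itself. Then $\Phi\neq\emptyset$: for a Banach limit $\mu$ on $\ell^{\infty}(\n)$, the barycenter $U_{i}x=\int T_{i}^{n}x\ud\mu(n)$ exists in $\overline{\mathrm{co}}^{\,w}\{T_{i}^{n}x:n\ge1\}\subseteq B_{x}$ (as a weak cluster point of the barycenters of finitely supported approximants of $\mu$), it is $Q$-nonexpansive because the barycenter depends linearly on the integrand and each $q$ is weakly lower semicontinuous, it satisfies $U_{i}T_{i}=U_{i}$ by shift invariance of $\mu$, it fixes $\text{Fix}(\sc)$ pointwise, and $U_{i}x$ lies in every weakly compact convex $\sc$-invariant set containing $x$. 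One verifies that $\Phi$ is a semigroup under composition, is stable under left multiplication by each $T_{j}$ ($j\in I$), is closed for the topology of pointwise weak convergence, and — being a closed subset of the compact product $\prod_{x\in C}B_{x}$ — is compact with continuous right multiplications, hence a compact right topological semigroup. Choosing an idempotent $R\in\Phi$ with $R(C)$ minimal (via a minimal left ideal of $\Phi$, or a Zorn argument inside $\Phi$), one shows, using $\text{Fix}(\sc)\subseteq R(C)$, the $T_{j}$-stability of $\Phi$, and hypotheses (a) and (b), that $R(C)\subseteq\text{Fix}(\sc)$; since $R$ fixes $\text{Fix}(\sc)$ pointwise, $R$ is then a $Q$-nonexpansive retraction of $C$ onto $\text{Fix}(\sc)$. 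Set $P_{i}=R$: then $P_{i}T_{i}=RT_{i}=R=P_{i}$, while $T_{i}P_{i}=P_{i}$ holds automatically since $P_{i}(C)=\text{Fix}(\sc)$, so $P_{i}T_{i}=T_{i}P_{i}=P_{i}$; and by construction $P_{i}$ maps each weakly compact convex $\sc$-invariant set into itself, whence (by intersecting with the sets $B_{x}$) every weakly closed convex $\sc$-invariant subset of $C$ is $P_{i}$-invariant.

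\emph{Main obstacle.} The crux is the step $R(C)\subseteq\text{Fix}(\sc)$. Two features of the setting obstruct the naive arguments: $Q$-nonexpansive maps need not be weakly continuous, so left multiplication $V\mapsto T_{j}V$ on $\Phi$ need not be continuous and only a right topological semigroup structure is available; and they need not be affine, so weakly compact convex hulls of orbits need not be $\sc$-invariant — this is why every invariant set above is produced as an intersection rather than a closed convex hull. Hypothesis (b) is exactly what shows that the chosen idempotent of $\Phi$ cannot carry a point off the convex set $\text{Fix}(\sc)$ without contradicting the minimality of its range, while (a) supplies the fixed points this argument feeds on; carrying it out is the technical heart, and proceeds as in \cite[Theorem~2.1]{said2012} with the seminorm $q$ playing the role of the norm.
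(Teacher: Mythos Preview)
Your overall architecture --- define a compact family of ``good'' $Q$-nonexpansive self-maps of $C$ that absorb $T_{i}$ on the right and respect $\sc$-invariant sets, then extract a distinguished element --- matches the paper's, but the selection mechanism differs. The paper (following Bruck) equips its family $\mathfrak{R}$ with the preorder $T\preceq U\iff q(Tx-Ty)\le q(Ux-Uy)$ for all $x,y,q$, and uses Zorn together with compactness to produce a $\preceq$-minimal $P_{\alpha}$; the payoff is the rigidity statement $(*)$: if $T\in\mathfrak{R}$ and $T\preceq P_{\alpha}$ then $q(Tx-Ty)=q(P_{\alpha}x-P_{\alpha}y)$ identically. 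You instead exploit that $\Phi$ is a compact \emph{right} topological semigroup and pick an idempotent $R$ in a minimal left ideal. Your Banach-limit construction of a member of $\Phi$ is essentially the paper's Ces\`aro-mean-plus-weak-subnet argument in disguise; the paper's version is a bit safer in a general locally convex space because it never needs the barycentric integral to exist, only a cluster point of finite averages.

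There is, however, a real gap in how you handle the hypotheses. In the theorem (a) and (b) are \emph{alternatives}, not simultaneous assumptions --- the paper's proof explicitly splits into ``case (a)'' and ``case (b)''. Your write-up invokes (b) to prove $\text{Fix}(\sc)$ convex, invokes (a) for the ``minimal invariant sets are singletons'' lemma, and then speaks of ``hypotheses (a) and (b)'' jointly in the crucial step. Under (a) alone your idempotent approach does go through cleanly: for $x\in C$ the orbit $K=\{V(Rx):V\in\Phi\}$ is weakly compact, convex and $\sc$-invariant, so some $L\in\Phi$ has $LRx\in\text{Fix}(\sc)$; since $LR\in\Phi R$ and $\Phi R$ is a minimal left ideal there is $W\in\Phi$ with $WLR=R$, whence $Rx=W(LRx)=LRx\in\text{Fix}(\sc)$. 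Under (b) alone you have neither the singleton lemma nor your convexity argument for $\text{Fix}(\sc)$, and you have not written the replacement. The paper's preorder minimality $(*)$ handles case (b) directly: applying $(*)$ to $T_{j}P_{\alpha}$ and to $\tfrac12(T_{j}P_{\alpha}+P_{\alpha})$ (both in $\mathfrak{R}$ by convexity) yields $q(T_{j}P_{\alpha}x-z)=q(P_{\alpha}x-z)=q(\tfrac12(T_{j}P_{\alpha}x+P_{\alpha}x)-z)$ for every $z\in\text{Fix}(\sc)$ and every $q$, and then (b) forces $T_{j}P_{\alpha}x=P_{\alpha}x$. Your minimal-left-ideal route can be patched analogously (both $T_{j}R$ and $\tfrac12(T_{j}R+R)$ lie in $\Phi R$ because each equals itself composed with $R$, so each can be left-inverted back to $R$ inside $\Phi$, giving the needed seminorm equalities), but this is exactly the ``technical heart'' you deferred, and it must be carried out separately in each case rather than by appealing to (a) and (b) at once.
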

\begin{proof}

 Let  $C^{C}$ be the product space with  product topology induced by the  weak topology  on  $C$. Now for a fixed $\alpha \in I$, consider the following set

  $\mathfrak{R}=\{T \in C^{C}: T \; \text{is $Q$-nonexpansive}, T\circ T_{\alpha}=T\\
  \text{and every  weakly closed convex $\sc$-invariant subset of $C$ is also $T$ -invariant}\}$. From the fact that  for each $z \in \text{Fix}(\sc)$, the singleton set  $\{z\}$ is a  weakly closed convex $\sc$-invariant subset of $C$, then for each    $T \in \mathfrak{R}$, $Tz=z$. Fix $z_{0} \in \text{Fix}(\sc)$ and let, for each $x \in C$, $$C_{x}:=\{y \in C : q(y-z_{0}) \leq q(x-z_{0}),\; \text{for all}\; q \in Q\}. $$  For all   $x \in C$ and   $T \in \mathfrak{R}$, we have that  $T(x) \in  C_{x}$ since  $q(T(x)-z_{0}) = q(T(x)-T(z_{0}))\leq q(x-z_{0})$  for all  $q \in Q$. Hence $\mathfrak{R} \subseteq \prod_{x \in C} C_{x}$, where  $\prod_{x \in C} C_{x}$ is the Cartesian product of sets $C_{x}$ for all  $x \in C$. For each $q \in Q$, from  the fact that for each  real number $\lambda$ the level  set $\{x \in C: q(x) \leq \lambda\}$  is   closed and  convex   then from  Corollary 1.5 (p. 126) in \cite{conway}  is   weakly closed, hence  by   Proposition 2.5.2 in \cite{Ag}
each seminorm $q\in Q$ is weakly lower semicontinuous. Because    $ C$ is a    weakly closed convex subset of $E$,  $C_{x}$
is convex and weakly closed for each  $x \in C$. Since $C$ is locally weakly compact and $C_{x}$ is $\tau_{Q}$-bounded, we can conclude that     $C_{x}$ is  weakly compact. By Tychonoff's theorem, we know that  when $C_{x}$ is given the weak topology and $ \prod_{x \in C} C_{x}$ is given the corresponding product topology, $ \prod_{x \in C} C_{x}$ is   compact. Next we prove that  $\mathfrak{R}$ is   closed in $\prod_{x \in C} C_{x}$. Let $\{ T_{\lambda}: \lambda \in \Lambda\}$ be a net in $\mathfrak{R}$ which  converges to $T_{0}$ in  $ \prod_{x \in C} C_{x}$. Hence if $z \in \text{Fix}(\sc)$, $T_{\lambda}z=z$, then  $T_{0}z= weak-\lim_{\lambda}T_{\lambda}(z)=z$. Since  each seminorm $q\in Q$ is weakly lower semicontinuous, we have $q(T_{0}x-T_{0}y) \leq \liminf_{\lambda} q(T_{\lambda}x-T_{\lambda}y)\leq  q(x-y)$, for each $x,y \in C$ and $q\in Q$. Obviously, we have  $T_{0}\circ T_{\alpha}=T_{0}$  and every weakly closed convex $\sc$-invariant
subset of $C$ is also $T_{0}$-invariant. Therefore, $T_{0} \in \mathfrak{R}$. Then $\mathfrak{R}$ is closed in $\prod_{x \in C} C_{x}$.
Since $\prod_{x \in C} C_{x}$ is compact, hence   $\mathfrak{R}$ is compact. Moreover,  $\mathfrak{R}\neq \emptyset$. Indeed, if we define the mapping $S_{n}= \frac{1}{n} \sum _{0}^{n-1}T_{\alpha}^{i} \in \prod_{x \in C} C_{x}$. Then we have,
\begin{align}\label{snkho}
\lim_{n \rightarrow \infty}S_{n}T_{\alpha}-S_{n}=0,
\end{align}
 on $C$. Indeed  for each $q\in Q$, we have
\begin{align*}
 q\big(S_{n}T_{\alpha}(z)-S_{n}(z)\big)&= \frac{1}{n} q\big(T_{\alpha}^{n}(z)-z\big)= \frac{1}{n}q\big(T_{\alpha}^{n}(z)-T_{\alpha}^{n}(z_{0})
+T_{\alpha}^{n}(z_{0}) -z\big)\\&\leq  \frac{1}{n}q\big(T_{\alpha}^{n}(z)-T_{\alpha}^{n}(z_{0})\big)+ \frac{1}{n}q(
z_{0}  -z) \\&\leq  \frac{2}{n}q(
z_{0}  -z) \rightarrow 0,
\end{align*}
 as $n \rightarrow \infty$, for all $z \in C$ and from the fact that  $\prod_{x \in C} C_{x}$ is compact, there exists a ( weakly pointwise) convergent subnet $\{S_{n _{(\lambda)}}\}_{\lambda}$. Hence we can define  $T(x)=weak-\displaystyle\lim_{\lambda}S_{n _{(\lambda)}}(x)$. Next, we will show that $T \in  \mathfrak{R}$, because     each  seminorm $q\in Q$ is weakly lower semicontinuous and $S_{n _{(\lambda)}}$ is  $Q$-nonexpansive for each
$ \lambda $ then  $T$ is $Q$-nonexpansive. From \eqref{snkho}, we also   have $T(T_{\alpha}x)=\displaystyle weak-\lim_{\lambda}S_{n _{(\lambda)}}(T_{\alpha}x)= \displaystyle weak-\lim_{\lambda}S_{n _{(\lambda)}}x=Tx$. Finally, every  weakly closed convex $\sc$-invariant subset of $C$ is  $S_{n }$-invariant and hence  is $T$ -invariant. Then $T \in \mathfrak{R}\neq \emptyset$.

Now let us to define a preorder $\preceq$ in $\mathfrak{R}$ by $T \preceq U$ if $q(Tx-Ty)\leq q(Ux-Uy)$ for each $x,y \in C$ and  $q\in Q$, and by  using a method similar to Bruck's method \cite{broook}, we find a minimal element $T_{min}$ in  $\mathfrak{R}$. Indeed, Using Zorn's Lemma, it is enough that we   show that each  linearly ordered subset of $\mathfrak{R}$ has
a lower bound in  $\mathfrak{R}$. Hence, let $\{ A_{\lambda}\}$ be a linearly ordered subset of $\mathfrak{R}$. Then the family of sets  $\{T \in \mathfrak{R}:  T \preceq A_{\lambda}\}$ is a linearly ordered  subset of  $\mathfrak{R}$  by inclusion. Taking  into account  the  closeness proof of  $\mathfrak{R}$  in
 $\prod_{x \in C} C_{x}$, these sets also   can accordingly be closed in  $\mathfrak{R}$,  and
hence   compact.
  Then from the finite intersection property, there exists  $R \in\bigcap_{\lambda}\{T \in \mathfrak{R}:  T \preceq A_{\lambda}\}$ with $R \preceq A_{\lambda}$ for all $\lambda$. Then each  linearly ordered subset of $\mathfrak{R}$ has
a lower bound in  $\mathfrak{R}$. We have
shown  until now that there exist    a minimal element $P_{\alpha}$  in the following sense:

 \text{if }$ T \in \mathfrak{R} \; \text{and } q(Tx-Ty)\leq q(P_{\alpha}x-P_{\alpha}y)$ \text{ for each  }  $x,y \in C $ \text{and } $q \in Q$ \\
  \text{then } $q(Tx-Ty)= q(P_{\alpha}x-P_{\alpha}y)$. \quad ($*$)

  Next we prove that $P_{\alpha}x \in \text{Fix}(\sc)$  for every    $x  \in C $. For a given   $x  \in C $, consider $K:=\{T(P_{\alpha}x): T \in \mathfrak{R}\}$.
   From the fact that  $\mathfrak{R}$  is convex and compact, by   Proposition 3.3.18 and the Definition 3.3.19 in  \cite{runde}, we conclude that    $K$ is a nonempty  weakly compact convex subset of $C$. Now we have $S(K)\subset K$ for each $S \in \sc$, because  $STT_{\alpha}=ST$  for each  $T \in \mathfrak{R}$ hence $ST \in \mathfrak{R}$ i.e,  $K$ is $\sc$-invariant.

    First, consider the case (a): from our assumption $K \cap \text{Fix}(\sc) \neq \emptyset$. Then there exists $L \in \mathfrak{R}$
such that  $L(P_{\alpha}x) \in \text{Fix}(\sc)$. Suppose that  $y=L(P_{\alpha}x)$. Since $P_{\alpha}, L \in \mathfrak{R}$ and  the set  $\{y\}$ is $\sc$-invariant, so we have $P_{\alpha}(y)=L(y)=y$, and since  $P_{\alpha}$ is minimal, we have $q(P_{\alpha}x-y)= q(P_{\alpha}x-P_{\alpha}y)= q\big(L(P_{\alpha}x)-L(P_{\alpha}y)\big)= q\big(L(P_{\alpha}x)-y\big)=0$, for each $q \in Q$ and since  $E$ is separated, $ P_{\alpha}x-y=0$, hence  $P_{\alpha}x=y \in  \text{Fix}(\sc)$ and this   holds  for each $x \in C$.

Now, consider the case (b): because $P_{\alpha} \in \mathfrak{R}$ we have  $T_{i}P_{\alpha} T_{\alpha}=T_{i}P_{\alpha}$ for all $i$. Therefore it is easy to verify that  $T_{i}P_{\alpha}\in \mathfrak{R}$,  for all $i$. Since  $\mathfrak{R}$  is convex, using ($*$), we have
 $$q(T_{i}P_{\alpha}x-z)= q\big(T_{i}P_{\alpha}x-T_{i}P_{\alpha}z\big)= q(P_{\alpha}x-z)= q \big(\frac{T_{i}P_{\alpha}x+P_{\alpha}x}{2}-z \big),$$
for each $x \in C, z \in \text{Fix}(\sc)$ and $i \in I$. Then from our assumption we have $T_{i}P_{\alpha}x=P_{\alpha}x$ for each $x \in C$ and $i \in I$. Hence, $P_{\alpha}x  \in  \text{Fix}(\sc)$,  for each $x \in C$.

 Since     $P_{\alpha} \in \mathfrak{R}$  and  $\{P_{\alpha}x\}$ is $\sc$-invariant  for each $x \in C$,  we conclude that  $P_{\alpha}^{2}=P_{\alpha}$ and $P_{\alpha}T_{\alpha}=T_{\alpha}P_{\alpha}=P_{\alpha} $.
\end{proof}
As a consequence of Theorem \ref{shlp}, we prove how we obtain an ergodic retraction by a $Q$-nonexpansive retraction.
\begin{co}\label{kgfdy}
Suppose that  $Q$ is a   family of   seminorms on a    locally
convex space $E$ which determines the topology of $E$. Let $ C $   be   a  locally weakly compact and  weakly closed convex     subset of $E$.
     Suppose that $\sc = \{T_{i} : i \in I \}$ is a family of $Q$-nonexpansive mappings on $C$ such that
$\text{Fix}(\sc)\neq \emptyset$. If there is a $Q$-nonexpansive retraction $R$ from $C$ onto
$\text{Fix}(\sc)$,
then for each $i \in I$, there exists a $Q$-nonexpansive retraction $P_{i}$ from $C$ onto
$\text{Fix}(\sc)$, such that $P_{i}T_{i} = T_{i}P_{i} = P_{i}$, and every weakly closed convex $\sc \cup \{R\}$-invariant
subset of $C$ is also $P_{i}$-invariant.
\end{co}
\begin{proof} Setting  $ \sc ^{'}:= \sc \cup \{R\}$ and \\
 $\mathfrak{R^{'}}=\{T \in C^{C}: T \; \text{is $Q$-nonexpansive}, T\circ T_{\alpha}=T\\
  \text{and every  weakly closed convex $\sc ^{'}$-invariant subset of $C$ is also $T$ -invariant}\}$,
 we get that $\text{Fix}(\sc^{'})= \text{Fix}(\sc )$ and by replacing $\sc$ with $ \sc ^{'}$  and $\mathfrak{R}$ with $\mathfrak{R^{'}}$ in the proof of Theorem \ref{shlp}, we find a minimal element  $P_{\alpha}$ in the sense of ($*$). Now we have  $R\circ T \in \mathfrak{R^{'}}$ for each $T \in \mathfrak{R^{'}}$. Indeed,  $R\circ T\circ T_{\alpha}=R\circ T$  for each $T \in \mathfrak{R^{'}}$ and because $R \in \sc ^{'}$, we have that every  weakly closed convex $\sc ^{'}$-invariant subset of $C$ is also $R$ -invariant, therefore is $R\circ T$-invariant for each $T \in \mathfrak{R^{'}}$. Hence  for each $x \in C$, the set $K=\{T(P_{\alpha}x): T \in \mathfrak{R^{'}}\}$ is an $R$-invariant subset of $C$  for each $T \in \mathfrak{R^{'}}$. Therefore from the fact that $R(K) \subset K \cap   R(C)=K \cap \text{Fix}(\sc)$, we have $\text{Fix}(\sc) \neq \emptyset$. Now by repeating the reasoning used in Theorem \ref{shlp}, we will get the desired result.
\end{proof}
As an application of Theorem  \ref{kgfdy}, we have the following Theorem:
\begin{co}\label{bwopxh}
Suppose that  $Q$ is a   family of   seminorms on a    locally
convex space $E$ which determines the topology of $E$. Let $ C $   be   a  locally weakly compact and  weakly closed convex     subset of $E$.
     Suppose that $\sc = \{T_{i} : i \in I \}$ is a family of $Q$-nonexpansive mappings on $C$ such that
$\text{Fix}(\sc)\neq \emptyset$. Consider the following assumption:
\begin{enumerate}
  \item [(a)] $E$ is separated and for every nonempty  weakly compact convex $\sc $-invariant subset $K$ of $C$,
$K \cap \text{Fix}(\sc) \neq \emptyset$,
  \item [(b)] if $x,y \in C$ and $q(\frac{x+y}{2})=q(x)=q(y)$ for all $q \in Q$, then $x=y$,
 \item [(c)]   there exists  a $Q$-nonexpansive retraction $R$ from $C$ onto
$\text{Fix}(\sc)$.
\end{enumerate}
Let   $\{P_{i}\}_{i \in I}$  be the family of retractions obtained in the above Theorem. Then for each $x \in C$, $$\overline{\{ T_{i}^{n}x: i \in I, n \in \mathbb{N}\}}^{\tau_{Q}} \cap \text{Fix}(\sc)\subseteq \overline{\{ P_{i}(x): i \in I\}}^{\tau_{Q}}.$$
\end{co}
\begin{proof}
Consider an $\epsilon > 0$ and let  $g \in \overline{\{ T_{i}^{n}x: i \in I, n \in \mathbb{N}\}}^{\tau_{Q}} \cap \text{Fix}(\sc)$. Then for each $p \in Q$,  there exists $i \in I$ and $ n \in \mathbb{N}$ such that $q( T_{i}^{n}x -g)< \epsilon$. From our  assumptions  and using Theorems \ref{shlp}  and \ref{kgfdy}, there exists a $Q$-nonexpansive retraction $P_{i}$ such that $P_{i}=P_{i}T_{i}$, hence we have,
$$q( P_{i}x -g)= q( P_{i}T_{i}^{n}x -g)\leq q( T_{i}^{n}x -g)< \epsilon,$$
then we conclude $g \in \overline{\{ P_{i}(x): i \in I\}}^{\tau_{Q}}$.
\end{proof}

\section{The  weakly  fixed point property for commuting semigroups}
Let  $Q$ be a   family of   seminorms on a      locally convex space $E$ which determines the topology of $E$. The   goal   of this section  is to show If $E$ has the weakly fixed point property, then $E$ has the weakly fixed
point property for commuting separable semitopological semigroups.

We will   make use of the following two Theorems in this section.
\begin{thm}\label{bruuks}
 Suppose that  $Q$ is a   family of   seminorms on a      locally convex space $E$ which determines the topology of $E$. Let $E$ be Hausdorff and sequentially complete and    $C$ be a nonempty  closed convex and bounded subset of   $E$. If   $\{F_{n}\}$
is a descending sequence of nonempty $Q$-nonexpansive retracts of $C$, then $\bigcap_{n=1}^{\infty}  F_{n}$ is
the fixed point set of some $Q$-nonexpansive mapping  $r: C \rightarrow C$.
\end{thm}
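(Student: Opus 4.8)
The plan is to follow Bruck's strategy of replacing a countable family by a single map. First I fix, for each $n$, a $Q$-nonexpansive retraction $r_n\colon C\to C$ with $\text{Fix}(r_n)=F_n$, and form the iterates $R_n:=r_n\circ r_{n-1}\circ\cdots\circ r_1$. Each $R_n$ is again $Q$-nonexpansive (composition), and since $F_n\subseteq F_j$ for $j\le n$ one checks that $R_n$ is a retraction of $C$ onto $F_n$, so $\text{Fix}(R_n)=F_n$; a short computation also gives the absorption identities $R_m\circ R_n=R_n\circ R_m=R_{\max(m,n)}$. I then fix weights $\lambda_n>0$ with $\sum_{n=1}^\infty\lambda_n=1$ (decaying fast enough, to be chosen later) and define $r(x):=\sum_{n=1}^\infty\lambda_n R_n(x)$.

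Next I check that $r$ is a well-defined $Q$-nonexpansive self-map of $C$. Fix $q\in Q$ and $z_0\in C$; since $C$ is bounded and $q$ is continuous, $M_q:=\sup_{y\in C}q(y-z_0)<\infty$, so for the partial sums $s_N(x)=\sum_{n\le N}\lambda_n R_n(x)$ one has $q\big(s_N(x)-s_M(x)\big)\le(M_q+q(z_0))\sum_{n=M+1}^N\lambda_n\to0$; hence $\{s_N(x)\}$ is Cauchy, and, $E$ being sequentially complete, it converges. With $\sigma_N:=\sum_{n\le N}\lambda_n<1$, the vector $\sigma_N^{-1}s_N(x)$ is a convex combination of points of $C$ converging to $r(x)$, so $r(x)\in\overline C=C$. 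That $r$ is $Q$-nonexpansive follows from $q(rx-ry)=\lim_N q(s_Nx-s_Ny)\le\lim_N\sum_{n\le N}\lambda_n q(R_nx-R_ny)\le q(x-y)$, using continuity and subadditivity of $q$ and $Q$-nonexpansiveness of the $R_n$. The inclusion $\bigcap_n F_n\subseteq\text{Fix}(r)$ is then immediate, since $x\in\bigcap_n F_n$ gives $R_nx=x$ for all $n$, hence $rx=(\sum_n\lambda_n)x=x$.

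The reverse inclusion $\text{Fix}(r)\subseteq\bigcap_n F_n$ is the heart of the matter, and I expect it to be the main obstacle. The goal is to show that $rx=x$ forces $R_kx=x$, i.e.\ $x\in F_k$, for every $k$. Here the descending structure of $\{F_n\}$ must be used essentially: starting from $x=\sum_n\lambda_n R_nx$, one combines the $Q$-nonexpansiveness of the individual $r_j$---which, because $R_mx\in F_m\subseteq F_j$ for $j\le m$, yields monotone estimates for $q(R_jx-R_mx)$ along the chain---with the absorption identities and convexity, expanding $q(x-R_kx)=q\big(\sum_n\lambda_n(R_nx-R_kx)\big)$ and bounding the cross terms; the strict inequality $\sigma_N<1$ then supplies the contraction that forces $q(x-R_kx)=0$ for every $q\in Q$, whence $R_kx=x$ because $E$ is separated. (One can also organise this inductively, peeling off $F_1,F_2,\dots$: on $F_k$ the map $r$ restricts to a convex combination of the identity with retractions onto the tail $F_{k+1}\supseteq F_{k+2}\supseteq\cdots$, and the same reasoning applies; this makes transparent where ``descending sequence'' is indispensable.) This is essentially Bruck's argument transcribed to the seminorm setting, and it is the only step requiring more than routine estimates. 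Combining the two inclusions gives $\text{Fix}(r)=\bigcap_{n=1}^\infty F_n$, as claimed.
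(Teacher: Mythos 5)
Your construction (composites $R_n=r_n\circ\cdots\circ r_1$, absorption identities, convergence of $\sum_n\lambda_nR_n(x)$ by boundedness of $C$ and sequential completeness, nonexpansiveness of $r$, and the easy inclusion $\bigcap_nF_n\subseteq\text{Fix}(r)$) is fine and matches the paper's setup, except that the paper works with the $r_n$ themselves rather than composites. But the reverse inclusion, which you yourself flag as the heart of the matter, is only gestured at, and the gesture as written does not work. Carrying out exactly the expansion you propose, with $x=r(x)$ fixed and $k$ fixed, the descending structure gives $q(R_nx-R_kx)\le q(x-R_kx)$ for $n<k$ and only the crude bound $d_q=\sup\{q(u-v):u,v\in C\}$ for $n>k$, so after dividing by $\sum_{n\ge k}\lambda_n>0$ you obtain
\begin{equation*}
q\big(x-R_kx\big)\ \le\ d_q\,\frac{\sum_{n>k}\lambda_n}{\sum_{n\ge k}\lambda_n},
\end{equation*}
and nothing more. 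There is no ``contraction supplied by $\sigma_N<1$'' that forces this to be $0$ for a fixed $k$: with, say, $\lambda_n=2^{-n}$ the right-hand side equals $d_q/2$ for every $k$, so your argument stalls. Two ingredients are missing. First, the weights cannot be arbitrary: they must be chosen (as in Bruck, and as in the paper's condition \eqref{fcdsgr}) so that $\lim_{k}\frac{\sum_{n>k}\lambda_n}{\sum_{n\ge k}\lambda_n}=0$, e.g.\ $\lambda_n$ proportional to $1/n!$; you left the choice unspecified. Second, even with such weights you do not get $q(x-R_kx)=0$ for each fixed $k$; what you get is $q(x-R_kx)\to0$ as $k\to\infty$, i.e.\ $R_kx\to x$ in $\tau_Q$. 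The conclusion $x\in F_m$ then comes from a limiting argument: for all $k\ge m$ one has $R_kx\in F_k\subseteq F_m$, and $F_m$ is $\tau_Q$-closed (it is the fixed point set of the continuous retraction $r_m$), hence the limit $x$ lies in $F_m$ for every $m$. This closedness-plus-limit step, together with the special choice of $\{\lambda_n\}$, is precisely what the paper's proof supplies and what your proposal omits; without them the claimed identity $\text{Fix}(r)=\bigcap_nF_n$ is not established.
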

\begin{proof}
For each  $n \in \mathbb{N}$, let $r_{n}$ be a $Q$-nonexpansive retraction from $C$ onto $F_{n}$. Let $\{\lambda_{n}\}_{n=1}^{\infty}$ be a sequence such that $\lambda_{n} > 0$ for each $n \in \mathbb{N}$ and  $\sum_{n=1}^{\infty} \lambda_{n}=1$ and
\begin{equation}\label{fcdsgr}
  \lim_{n} \frac{\sum_{j=n+1}^{\infty} \lambda_{j}}{\sum _{j=n}^{\infty}\lambda_{j}}=0.
\end{equation}
 From our assumptions and as in Lemma 1 in \cite{bruckac}, we may assume that  $r=\sum_{n=1}^{\infty} \lambda_{n}r_{n}$. Then by   lower
semicontinuity of each $q \in Q$, we have that  $r: C \rightarrow C$ is a  $Q$-nonexpansive mapping such that $\bigcap_{n=1}^{\infty}  F_{n} \subseteq \text{Fix}(r)$. Now it suffices to show that $\text{Fix}(r)  \subseteq \bigcap_{n=1}^{\infty}  F_{n}$. Hence consider  $x \in \text{Fix}(r)$. Then by   lower
semicontinuity of each $q \in Q$, we have
\begin{align}\label{wewsdr}
 q\big(x-r_{n}(x)\big)=&  q\big(r(x)-r_{n}(x)\big)= q\big(\sum _{j=1}^{\infty}\lambda_{j}[r_{j}(x)-r_{n}(x)]\big)\nonumber \\ \leq &   \sum _{j=1}^{\infty}\lambda_{j}q(r_{j}(x)-r_{n}(x)),
\end{align}
for  each $q \in Q$. Because for $1\leq j< n$, $r_{n}(x) \in F_{n} \subseteq F_{j}$, then we have $r_{j}r_{n}(x)=r_{n}(x)$. Therefore we have
\begin{equation*}
  q(r_{j}(x)-r_{n}(x))= q(r_{j}(x)-r_{j}r_{n}(x)) \leq q\big(x-r_{n}(x)\big);
\end{equation*}
now for $j=n$, $ q(r_{j}(x)-r_{n}(x))= 0$;
finally let $$d_{q}= q-diam C=  \sup\{q(x-y): x,y \in C\},$$   then for $j> n$, $q(r_{j}(x)-r_{n}(x)) \leq d_{q}$. Then from \eqref{wewsdr} we have
\begin{align*}
 q\big(x-r_{n}(x)\big) \leq &   \sum _{j=1}^{n-1}\lambda_{j}q(x-r_{n}(x)) + d_{q}\sum _{j=n+1}^{\infty}\lambda_{j}.
\end{align*}
        Because $\sum_{n=1}^{\infty} \lambda_{n}=1$, this in turn concludes that
\begin{align*}
 q\big(x-r_{n}(x)\big) \leq  d_{q}\frac{\sum_{j=n+1}^{\infty} \lambda_{j}}{\sum _{j=n}^{\infty}\lambda_{j}}.
\end{align*}
Then by \eqref{fcdsgr}, $r_{n}(x)\rightarrow x$, in $\tau_{Q}$ when  $n \rightarrow \infty$. But from the fact that $\{F_{n}\}$ is descending,  therefore $r_{n}(x) \in F_{m}$ for each $n \geq m$, and  since $ F_{m}$ is the fixed point set of continuous mapping  $r_{m}$,   $ F_{m}$ is closed in topology $\tau_{Q}$. Now since  $\lim r_{n}(x)=x$, we have $x \in F_{m}$ for $m=1,2,3, \cdots$, therefore $F(r)=\bigcap_{n} F_{n}$.
\end{proof}

\begin{lem} \label{commmuting}
 Suppose that  $Q$ is a   family of   seminorms on a      locally convex space $E$ which determines the topology of $E$.  Let $E$ be separated and sequentially complete.     If $E$   has the weakly fixed point property,  then $E$ has the  weakly fixed point property for commuting sequences of $Q$-nonexpansive mappings on $C$.
\end{lem}
\begin{proof}
 Since  $E$ is separated,    $E$ is Hausdorff with respect to the weak topology.   Let     $C$ be a nonempty  weakly compact  convex subset of   $E$. Since  $C$ is    weakly compact, then  $C$ is weakly closed and  by Proposition 2.7 in \cite{sob},  is weakly bounded hence  by Corollary 3.31 in \cite{sob} is bounded.
 Suppose that $\{T_{n}\}$ be a commuting sequence of $Q$-nonexpansive mappings on $C$. First, we show $\bigcap_{j=1}^{n} \text{Fix}(T_{j})$ is a nonempty $Q$-nonexpansive retract of $C$, for all $n \in \mathbb{N}$. The proof is by induction on $n$. For $n=1$, from Theorem \ref{shlp} and the fact that $E$   has the weakly fixed point property, $\text{Fix}(T_{1})$ is a nonempty $Q$-nonexpansive retract of $C$. Now let  $\bigcap_{j=1}^{n} \text{Fix}(T_{j})$ be a nonempty $Q$-nonexpansive retract of $C$ and $R: C \rightarrow \bigcap_{j=1}^{n} \text{Fix}(T_{j})$ be a   $Q$-nonexpansive retraction. Now we show that  $  \text{Fix}(T_{n+1}R)= \bigcap_{j=1}^{n+1} \text{Fix}(T_{j})$. Obviously we have $ \bigcap_{j=1}^{n+1} \text{Fix}(T_{j}) \subseteq  \text{Fix}(T_{n+1}R)$. For the reverse inclusion, let $x \in  \text{Fix}(T_{n+1}R)$ then $T_{n+1}R( x) =x$. Since from our assumptions $T_{n+1} $ commutes with $T_{1}, \ldots, T_{n}$ and $ R(x) \in \bigcap_{j=1}^{n} \text{Fix}(T_{j})$, we conclude that $\bigcap_{j=1}^{n} \text{Fix}(T_{j})$ is $T_{n+1}$-invariant and hence  $x=T_{n+1}( R(x)) \in \bigcap_{j=1}^{n} \text{Fix}(T_{j})$. Therefore, $R(x)= x $ and so $x=T_{n+1}(R( x)) =T_{n+1}(x)$. Then $x \in \bigcap_{j=1}^{n+1} \text{Fix}(T_{j})$. Thus $\text{Fix}(T_{n+1}R) \subseteq \bigcap_{j=1}^{n+1} \text{Fix}(T_{j})$, so $\text{Fix}(T_{n+1}R) = \bigcap_{j=1}^{n+1} \text{Fix}(T_{j})$. But,  from Theorem \ref{shlp} and the fact that $E$   has the weakly fixed point property, the fixed point set of a $Q$-nonexpansive self mapping of $C$ is  a nonempty $Q$-nonexpansive retract of $C$. Therefore,  $  \bigcap_{j=1}^{n+1} \text{Fix}(T_{j})$ is  a nonempty $Q$-nonexpansive retract of $C$. Until now we have shown that  $  \bigcap_{j=1}^{n} \text{Fix}(T_{j})$ is  a nonempty $Q$-nonexpansive retract of $C$ for each $n \in \mathbb{N}$, thus from Theorem \ref{bruuks}, we conclude that $  \bigcap_{j=1}^{\infty} \text{Fix}(T_{j})$ is
the fixed point set of some $Q$-nonexpansive mapping  $r: C \rightarrow C$. So by       the weakly fixed point property of $E$, we have proved that
$  \bigcap_{j=1}^{\infty} \text{Fix}(T_{j})=\text{Fix}(r)\neq \emptyset$, this   completes the proof.
\end{proof}
Now we prove   the main  conclusion   of this section.
\begin{thm} \label{separable}
 Suppose that  $Q$ is a   family of   seminorms on a      locally convex space $E$ which determines the topology of $E$.  Let $E$ be separated and sequentially complete.  If $E$   has the weakly fixed point property,  then $E$ has the  weakly fixed point property for commuting separable semitopological semigroups.
\end{thm}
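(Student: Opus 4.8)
The plan is to reduce the (possibly uncountable) semigroup to a countable subfamily, apply Theorem~\ref{commmuting}, and then use the separability of $S$ together with the continuity of the action to carry a common fixed point of the subfamily out to all of $S$. Fix a nonempty weakly compact convex subset $C$ of $E$ and a $Q$-nonexpansive representation $\sc=\{T_{s}:s\in S\}$ of the commuting separable semitopological semigroup $S$ on $C$. Since $E$ is separated, its weak topology is Hausdorff; since $C$ is weakly compact, it is weakly closed and bounded, so $C$ satisfies all the hypotheses needed to invoke the earlier theorems.

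First I would choose, using separability of $S$, a countable dense subset $\{s_{n}:n\in\n\}$ of $S$. As $S$ is commutative, $T_{s_{m}}T_{s_{n}}=T_{s_{m}s_{n}}=T_{s_{n}s_{m}}=T_{s_{n}}T_{s_{m}}$, so $\{T_{s_{n}}\}_{n\in\n}$ is a commuting sequence of $Q$-nonexpansive self-mappings of $C$. Theorem~\ref{commmuting} applies, because $E$ is separated and sequentially complete and has the weakly fixed point property, and yields that $F:=\bigcap_{n\in\n}\text{Fix}(T_{s_{n}})$ is nonempty. Each $T_{s_{n}}$ is $\tau_{Q}$-continuous (as $q(T_{s_{n}}x-T_{s_{n}}y)\le q(x-y)$ for all $q\in Q$), so each $\text{Fix}(T_{s_{n}})$ is a convex $\tau_{Q}$-closed, hence weakly closed, subset of $C$; thus $F$ is a weakly compact convex subset of $C$.

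The remaining step is to prove $F\subseteq\text{Fix}(\sc)$, the reverse inclusion being trivial, which gives $\text{Fix}(\sc)=F\neq\emptyset$ and finishes the proof. Fix $x\in F$ and $s\in S$. By density of $\{s_{n}:n\in\n\}$ there is a net $(s_{n_{\alpha}})_{\alpha}$ in that set with $s_{n_{\alpha}}\to s$ in $S$, and by the separate continuity of the representation $T_{s_{n_{\alpha}}}x\to T_{s}x$ weakly. Since $x\in F$, $T_{s_{n_{\alpha}}}x=x$ for every $\alpha$, so the net is constantly $x$; because the weak topology is Hausdorff, its limit $T_{s}x$ equals $x$. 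Hence $x\in\text{Fix}(T_{s})$ for each $s\in S$, i.e. $x\in\text{Fix}(\sc)$.

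I expect the main obstacle not to lie in this theorem itself — the hard analysis lives in Theorems~\ref{shlp} and~\ref{commmuting} (and, behind the latter, Theorem~\ref{bruuks}) — but rather in making the concluding step airtight: the identification $T_{s}x=x$ rests on the continuity of the orbit map $s\mapsto T_{s}x$ (which is where the semitopological structure of $S$ is used) together with the Hausdorffness of the weak topology, i.e. the separatedness of $E$. As an alternative finish one could bypass the reduction to sequences and argue by the finite-intersection property directly, using Theorem~\ref{shlp} to see that each $\text{Fix}(T_{s})$ is a $Q$-nonexpansive retract of $C$, in particular weakly compact and convex, and that finite intersections of such fixed-point sets are nonempty; but the route above is the one consonant with the machinery developed in this paper.
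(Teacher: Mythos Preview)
Your proposal is correct and follows essentially the same route as the paper: choose a countable dense subset $\{s_{n}\}$ of $S$, apply Theorem~\ref{commmuting} to the commuting sequence $\{T_{s_{n}}\}$ to obtain a common fixed point, and then use density together with continuity of the orbit map $s\mapsto T_{s}x$ (and Hausdorffness) to conclude that this point is fixed by every $T_{s}$. The only cosmetic difference is that the paper phrases the last step via the inclusion $\{T_{s}z_{0}:s\in S\}\subseteq\overline{\{T_{s_{n}}z_{0}\}}^{\tau_{Q}}=\{z_{0}\}$ in the $\tau_{Q}$-topology, whereas you argue pointwise with nets in the weak topology; both are valid.
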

\begin{proof}
Let     $C$ be a nonempty  weakly compact  convex subset of   $E$.  Let $ \sc=\{T_{s}:s\in S\} $  be  a  continuous  representation of a commuting separable semitopological semigroups $S$. Suppose that  $\{s_{n}\}$ is a countable  subset of $S$ which is  dense in $S$. Hence $\{T_{s_{n}}\}$ is also a  commuting sequence.  So from Theorem \ref{commmuting} and  the  weakly fixed point property, $\{T_{s_{n}}\}$  has
a common fixed point $z_{0}$ in $C$. But from the fact that  $\{s_{n}\}$ is a countable dense  subset of $S$ and $ \sc $  is  a  continuous  representation of $S$, we conclude that   $\{T_{s}x: s \in S\} \subseteq \overline{\{T_{s_{n}x}\}}^{\tau_{Q}}$ for each $x \in C$. Putting $x=z_{0}$, we have $\{T_{s}z_{0}: s \in S\} \subseteq \overline{\{T_{s_{n}z_{0}}\}}^{\tau_{Q}}=\{z_{0}\}$. Therefore, $z_{0}$ is
a common fixed point for $S$ and so $\text{Fix}(S) \neq \emptyset$ and the proof is
complete.
\end{proof}

\noindent{\bf Open problems:} Theorem \ref{separable} true for amenable semitopological semigroups?

\section{\textbf{Ergodic retractions for semigroups in  locally convex spaces} }

Before going to our ergodic theorem, we will need the following two Theorems.

 \begin{lem}\label{rooh}  Let $ S $ be a semigroup, $E$   be a real   dual  locally convex space  with real predual   locally convex space $D$    and  $U$   a convex neighbourhood  of $0$ in  $D$ and $p_{U}$ be the associated
Minkowski functional.  Let $f : S \rightarrow E$
be a     function  such that  $\langle x , f (t) \rangle \leq 1$  for each  $t \in S$ and $x \in U$. Let $X$ be a subspace of $B(S)$ such that   the mapping $t \rightarrow  \langle   x , f (t)\rangle$ be an element of $ X $, for
each     $ x \in D$.   Then, for any $\mu \in X^{*}$,  there exists a unique element $F_{\mu} \in E$ such that
 $\langle  x , F_{\mu}  \rangle = \mu_{t} \langle   x , f (t)\rangle$ , for all    $ x \in D$.
Furthermore, if $1 \in X$ and $\mu$ is
a mean on $X$, then $F_{\mu}$ is contained in $ {\overline{{\text{co}}\{f(t) : t \in S\}}}^{w^{*}}$.
 \end{lem}
 \begin{proof}
  We define $F_{\mu}$ by $\langle x , F_{\mu} \rangle = \mu_{t} \langle   x , f (t)\rangle$ for all  $ x \in D$. Obviously, $ F_{\mu}$ is
  linear in $x$. Moreover, from  Proposition 3.8  in \cite{sob},  we have
\begin{align}\label{contin}
| \langle x , F_{\mu}\rangle |  = & | \mu_{t} \langle    x , f (t)\rangle |
  \leq   \sup_{t } |\langle    x , f (t)\rangle \cdot\|\mu\|\leq P_{U}(x)\cdot \|\mu\|,
\end{align}
 for all  $ x  \in D$. Let $(x_{\alpha})$ be a net in $  D$ that converges to $ x_{0}$.  Then  by \eqref{contin} we have
 \begin{align*}
  | \langle x_{\alpha} , F_{\mu}\rangle - \langle x_{0} , F_{\mu}\rangle|  = | \langle x_{\alpha}- x_{0} , F_{\mu}\rangle | \leq P_{U}( x_{\alpha}- x_{0})\cdot \|\mu\|,
 \end{align*}
taking limit, since from Theorem 3.7  in \cite{sob},  $P_{U}$  is continuous, we have $F_{\mu}$  is continues on $D$, hence  $F_{\mu} \in E$.

Now, let   $1 \in X$ and $\mu$ be
a mean on $X$. Then, there exists a net $\{\mu_{\alpha}\}$
of finite means on $X$ such that $\{\mu_{\alpha}\}$ converges to $\mu$ with the weak$^{*}$  topology on
$X^{*}$. We may consider that
\begin{align*}
  \mu_{\alpha}=\sum _{i=1}^{n_{\alpha}}\lambda_{\alpha,i}\delta_{t_{\alpha,{i}}}.
\end{align*}
Therefore,
\begin{align*}
  \langle  x ,  F_{\mu_{\alpha}}  \rangle= (\mu_{\alpha})_{t}  \langle  x , f(t) \rangle= \langle x ,   \sum _{i=1}^{n_{\alpha}}\lambda_{\alpha,i}f({t_{\alpha,{i}}})   \rangle, ( \forall x \in D, \forall \alpha),
\end{align*}
then   we have
\begin{align*}
 F_{\mu_{\alpha}}= \sum _{i=1}^{n_{\alpha}}\lambda_{\alpha,i}f({t_{\alpha,{i}}})\in {\text{co}}\{f(t) : t \in S\}, ( \forall \alpha ),
\end{align*}
now since,
\begin{align*}
  \langle  x ,  F_{\mu_{\alpha}} \rangle= (\mu_{\alpha})_{t}  \langle   x , f(t) \rangle \rightarrow \mu_{t}  \langle    x , f(t) \rangle= \langle   x , f(t) \rangle, (x \in D),
\end{align*}
 $\{F_{\mu_{\alpha}}\}$ converges to $F_{\mu}$ in the weak$^{*}$  topology. Hence
\begin{align*}
 F_{\mu}\in \overline{{\text{co}}\{f(t) : t \in S\}}^{w^{*}},
\end{align*}
we can write  $F_{\mu}$ by  $\int f(t) d\mu(t)$.
 \end{proof}

\begin{lem}\label{bshhwj}
Let $ S $ be a semigroup,  $ C $      a closed convex     subset of a real
   locally convex space $E$. Let $\mathcal{B}$ be a
base at 0 for the topology consisting of convex, balanced sets.  Let $Q=\{q_{V}: V \in \mathcal{B} \}$
which      $q_{V}$  is the associated
Minkowski functional with  $V$.   Let $ \sc=\{T_{s}:s\in S\} $  be   a $Q$-nonexpansive representation of $S$ as     mappings from $C$ into
itself  and $X$ be a subspace of $B(S)$ such that $ 1 \in X $ and $\mu$ be a mean on $X$.
If we write $T_{\mu}x $ instead of
$\int\!T_{t}x\, d\mu(t),\/$
 then the following hold.
 \begin{enumerate}
   \item [(i)] If the mapping $t\rightarrow \langle  T_{t}x - T_{t}y , x^{*}  \rangle$ is  an element of $ X $ for
each $t \in S$,  $x, y \in C$ and $ x^{*} \in E^{*}$ then $T_{\mu}$ is a $Q$-nonexpansive mapping from $C$ into $C$,
   \item [(ii)]  if the mapping $t \rightarrow \left<T_{t}x, x^{*}\right>$ is an element of $ X $ for
each $x \in C$ and $ x^{*} \in E^{*}$ then $T_{\mu}x=x $  for each  $x \in Fix(\sc) $,
   \item [(iii)] If moreover $E$   is a real   dual  locally convex space  with real predual   locally convex space $D$  and  $ C $      a $w^{*}$-closed convex     subset  of $E$   and  $U$   a convex neighbourhood  of $0$ in  $D$ and $p_{U}$ is the associated
Minkowski functional.      Let   the mapping $t \rightarrow \langle z , T _{t}x \rangle $ is an element of $ X $ for
each $x \in C$ and $z \in D$ then  $T_{\mu}x \in \overline{co \,\{T_{t}x:t\in S\}}^{w^{*}}$ for each $x\in C$,
   \item[(iv)]  if the mapping $t \rightarrow \left<T _{t}x, x^{*}\right>$ is an element of $ X $ for
each $x \in C$ and $ x^{*} \in E^{*}$ and $  X$ is $ r_{s} $-invariant  for
  each $ s \in S $ and  $ \mu $ is right invariant, then $T_{\mu}T_{t} =T_{\mu} $ for each $ t \in S $,
   \item [(v)]  if $a \in E$ is an $Q$-attractive point of $\sc$ and  the mapping $t\rightarrow \langle a - T_{t}x , x^{*}  \rangle$ is  an element of $ X $ for
each $t \in S$,  $x \in C$ and $ x^{*} \in E^{*}$   then  $a  $ is an $Q$-attractive point of $T_{\mu}$.
 \end{enumerate}
\end{lem}
\begin{proof}
 (i) Let $x, y \in C$ and   $V \in \mathcal{B}$.  By  Proposition 3.33  in \cite{sob}, the topology on $E$ induced by
$Q$ is the original topology on $E$.
By Theorem 3.7  in \cite{sob}, $q_{V}$ is a continuous seminorm and from  Theorem 1.36 in \cite{rudin}, $q_{V}$ is a nonzero seminorm because if $x \notin V$  then $q_{V}(x) \geq 1$, hence   from Theorem \ref{hahn2},  there exists a functional $x^{*}_{V} \in X^{*}$ such that    $q_{V}( T_{\mu}x - T_{\mu}y) = \langle  T_{\mu}x - T_{\mu}y , x^{*}_{V}  \rangle$ and $q_{V}^{*}(x^{*}_{V})=1$, and since    from Theorem 3.7 in  \cite{sob},  $q_{V} (z)\leq 1$ for each $z \in V$, we conclude that  $\langle z ,  x^{*}_{V}\rangle \leq 1  $ for all $z \in V$.  Therefore     from Theorem  3.8 in \cite{sob}, $\langle z , x^{*}_{V}   \rangle \leq  q_{V}(z)$ for all $z \in E$.  Hence from the fact that   the function  $t\rightarrow \langle  T_{t}x - T_{t}y , x^{*}  \rangle$   is  an element of $ X $ for
each $t \in S$ and $ x^{*} \in E^{*}$,  we have
\begin{align*}
q_{V}( T_{\mu}x - T_{\mu}y) =& \langle  T_{\mu}x - T_{\mu}y , x^{*}_{V}  \rangle= \mu_{t} \langle  T_{t}x - T_{t}y , x^{*}_{V}  \rangle
 \\ \leq &\| \mu\| \sup_{t}| \langle  T_{t}x - T_{t}y , x^{*}_{V}  \rangle | \\ \leq & \sup_{t}
q_{V}( T_{t}x - T_{t}y) \\ \leq &  q_{V}(x-y),
\end{align*}
 then  we have
\begin{align*}
q_{V}( T_{\mu}x -  T_{\mu}y) \leq & q_{V}(x-y),
\end{align*}
  for all   $V \in \mathcal{B}$.

(ii)
Let  $x \in Fix(\sc) $ and  $x^{*} \in  E^{*}$. Then we have
\begin{equation*}
   \langle T_{\mu}x, x^{*}  \rangle= \mu_{t} \langle T_{t}x , x^{*}  \rangle= \mu_{t} \langle x , x^{*}  \rangle=  \langle x , x^{*}  \rangle
\end{equation*}

 (iii) this assertion  concludes from  Theorem \ref{rooh}.

   (iv) for  this assertion, note that
\begin{equation*}
   \langle T_{\mu}(T_{s}x) , x^{*}  \rangle= \mu_{t} \langle T_{ts}x , x^{*}  \rangle= \mu_{t} \langle T_{t}x , x^{*}  \rangle=  \langle T_{\mu}x, x^{*}  \rangle,
\end{equation*}
 (v) Let $x \in C$ and   $V \in \mathcal{B}$.
 From Theorem \ref{hahn2},  there exists a functional $x^{*}_{V} \in X^{*}$ such that    $q_{V}(a - T_{\mu}x) = \langle a - T_{\mu}x , x^{*}_{V}  \rangle$ and $q_{V}^{*}(x^{*}_{V})=1$. Since    from Theorem 3.7 in  \cite{sob},  $q_{V} (z)\leq 1$ for each $z \in V$, we conclude that  $\langle z ,  x^{*}_{V}\rangle \leq 1  $ for all $z \in V$.  Therefore     from Theorem  3.8 in \cite{sob}, $\langle z , x^{*}_{V}   \rangle \leq  q_{V}(z)$ for all $z \in E$.  Hence from the fact that   the function  $t\rightarrow \langle a - T_{t}x , x^{*}  \rangle$   is  an element of $ X $ for
each $t \in S$ and $ x^{*} \in E^{*}$,  we have
\begin{align*}
q_{V}(a - T_{\mu}x) =& \langle a - T_{\mu}x , x^{*}_{V}  \rangle= \mu_{t} \langle a - T_{t}x , x^{*}_{V}  \rangle
 \\ \leq &\| \mu\| \sup_{t}| \langle   a - T_{t}x , x^{*}_{V}  \rangle | \\ \leq & \sup_{t}
q_{V}(  a - T_{t}x) \\ \leq &  q_{V}(a-x),
\end{align*}
 then  we have
\begin{align*}
q_{V}( a - T_{\mu}x) \leq & q_{V}(a-x),
\end{align*}
  for all   $V \in \mathcal{B}$.
\end{proof}
Now we exhibit our ergodic theorem.
\begin{thm}\label{vhgfdkkjy}
Let $ S $ be a semigroup,  $ C $      a   locally weakly compact
and   closed convex       subset of a real
   locally convex space $E$. Let $\mathcal{B}$ be a
base at 0 for the topology consisting of convex, balanced sets.  Let $Q=\{q_{V}: V \in \mathcal{B} \}$
which      $q_{V}$  is the associated
Minkowski functional with  $V$.   Let $ \sc=\{T_{s}:s\in S\} $    be a right    amenable   $Q$-nonexpansive semigroup on $C$   such that
$\text{Fix}(\sc)\neq \emptyset$.   Consider the following assumption:
\begin{enumerate}
  \item [(a)] $E$ is separated and for every nonempty  weakly compact convex $\sc $-invariant subset $K$ of $C$,
$K \cap \text{Fix}(\sc) \neq \emptyset$,
  \item [(b)] if $x,y \in C$ and $q_{V}(\frac{x+y}{2})=q_{V}(x)=q_{V}(y)$ for all $V \in \mathcal{B}$, then $x=y$.
\end{enumerate}
Then there exists a $Q$-nonexpansive retraction $P$ from $C$ onto
$\text{Fix}(\sc)$, such that $PT_{t}= T_{t}P = P$ for every $t \in S$, and every weakly closed convex $\sc$-invariant
subset of $C$ is also $P$-invariant.
\end{thm}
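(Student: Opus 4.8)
The plan is to combine the averaging construction of Theorem~\ref{bshhwj} with the fixed‑point machinery of Theorem~\ref{shlp}. The key observation is that right amenability of $\sc$ supplies, via a right invariant mean $\mu$ on an appropriate subspace $X$ of $B(S)$, a single $Q$‑nonexpansive map $T_\mu$ that already behaves like a retraction ``up to'' the hypotheses of Theorem~\ref{shlp}. More precisely, first I would fix a right invariant mean $\mu$ on a suitable $S$‑invariant subspace $X \ni 1$ of $B(S)$ (for instance the space generated by $1$ together with all functions $t \mapsto \langle T_t x - T_t y , x^* \rangle$ and $t \mapsto \langle a - T_t x, x^*\rangle$; one checks these lie in $B(S)$ using $Q$‑nonexpansiveness and weak compactness of orbits, exactly as in the introduction's discussion). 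Then by Theorem~\ref{bshhwj}(i)--(iv), $T_\mu : C \to C$ is $Q$‑nonexpansive, fixes every point of $\mathrm{Fix}(\sc)$, and satisfies $T_\mu T_t = T_\mu$ for every $t \in S$; in particular $T_\mu T_t = T_\mu$ shows $T_\mu$ kills the $S$‑action on its image.

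Next I would show that $\mathrm{Fix}(T_\mu) = \mathrm{Fix}(\sc)$ and that $\mathrm{Fix}(\sc)$ is a $Q$‑nonexpansive retract of $C$. The inclusion $\mathrm{Fix}(\sc)\subseteq \mathrm{Fix}(T_\mu)$ is (ii). For the reverse, if $x = T_\mu x$ then for each $s$, $T_s x = T_s T_\mu x$; one uses that $T_\mu x$ lies in $\overline{\mathrm{co}}\,\{T_t x : t \in S\}$ (the weak analogue of Theorem~\ref{bshhwj}(iii), valid here since $C$ is weakly closed and each $q_V$ is weakly lower semicontinuous) together with the argument of Theorem~\ref{shlp}: the orbit closure $K = \overline{\mathrm{co}}^{\,w}\{T_t x : t\in S\}$ is a weakly compact convex $\sc$‑invariant subset of $C$, so under (a) it meets $\mathrm{Fix}(\sc)$, and minimality/separatedness forces $x \in \mathrm{Fix}(\sc)$; under (b) one runs instead the convexity‑and‑midpoint argument verbatim from the case (b) portion of Theorem~\ref{shlp}, applied to $T_\mu$ in place of the $T_i P_\alpha$. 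This identifies $\mathrm{Fix}(\sc)$ with $\mathrm{Fix}(T_\mu)$, and Theorem~\ref{shlp} applied to the singleton family $\{T_\mu\}$ (whose fixed point set is $\sc$‑invariant, being $\mathrm{Fix}(\sc)$) produces a $Q$‑nonexpansive retraction onto it.

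To upgrade this to a retraction $P$ with $P T_t = T_t P = P$ for \emph{every} $t\in S$ and with the invariance‑preservation property, I would feed $T_\mu$ into the scheme of Theorem~\ref{shlp}: let $\mathfrak{R}$ be the set of $Q$‑nonexpansive $T \in C^C$ with $T\circ T_\mu = T$ and preserving every weakly closed convex $\sc$‑invariant subset of $C$, and extract (via Zorn's Lemma and the finite‑intersection/compactness argument already written out there) a minimal element $P$. Since $T_\mu T_t = T_\mu$, composing shows $T \circ T_\mu = T$ implies $T\circ T_\mu T_t = T\circ T_\mu = T$, and more usefully $T_t P \in \mathfrak{R}$ for each $t$ once one checks $T_t P \circ T_\mu = T_t P$; minimality together with hypothesis (a) or (b) (the same two alternatives, via the orbit set $K=\{T(Px):T\in\mathfrak R\}$ being weakly compact convex $\sc$‑invariant) then gives $T_t P x = P x$ for all $t$ and $x$, hence $P x \in \mathrm{Fix}(\sc)$, $P^2 = P$, and $P T_t = T_t P = P$. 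Finally $P$ preserves weakly closed convex $\sc$‑invariant sets by construction.

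The main obstacle I anticipate is \emph{measurability/membership in $X$}: Theorem~\ref{bshhwj} requires, for each clause, that the relevant scalar functions $t\mapsto\langle \cdot,\cdot\rangle$ belong to the chosen $X$, and one must exhibit a single $S$‑invariant $X \ni 1$ for which a right invariant mean exists and which simultaneously contains all the functions needed for parts (i), (ii), (iv), and the weak version of (iii). Since $\sc$ is merely $Q$‑nonexpansive (not assumed continuous as a semigroup action), the cleanest route is to take $X = B(S)$ itself and invoke right amenability of $S$ directly; this makes every required function trivially an element of $X$, at the cost of needing $S$ right amenable as an abstract semigroup, which is exactly what ``right amenable $Q$‑nonexpansive semigroup'' should be taken to mean. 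A secondary technical point is verifying the weak (rather than $w^*$) version of Theorem~\ref{bshhwj}(iii), i.e.\ that $T_\mu x \in \overline{\mathrm{co}}^{\,w}\{T_t x:t\in S\}$ — but this follows from the net‑of‑finite‑means approximation in Theorem~\ref{rooh} combined with weak lower semicontinuity of each $q_V$, exactly as in the proof of Theorem~\ref{shlp}.
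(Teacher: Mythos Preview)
Your overall strategy—use a right invariant mean $\mu$ to produce $T_\mu$ via Theorem~\ref{bshhwj}, then run the compactness/Zorn argument of Theorem~\ref{shlp}—is exactly the paper's. But there is a genuine gap, and it comes from the specific $\mathfrak{R}$ you chose.

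You take $\mathfrak{R}=\{T:\ T\circ T_\mu=T,\ T\ \text{preserves weakly closed convex }\sc\text{-invariant sets}\}$. From a minimal $P\in\mathfrak{R}$ you correctly deduce (via $T_tP\in\mathfrak{R}$ and cases (a)/(b)) that $T_tPx=Px$, hence $Px\in\mathrm{Fix}(\sc)$ and $T_tP=P$. But you then assert $PT_t=P$ with only the word ``hence''. This does \emph{not} follow: membership in your $\mathfrak{R}$ only gives $PT_\mu=P$, and from $T_\mu T_t=T_\mu$ you get $PT_\mu T_t=P$, which is again just $PT_\mu=P$, not $PT_t=P$. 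Nor is $PT_t$ in your $\mathfrak{R}$, since $(PT_t)T_\mu=P(T_tT_\mu)$ and right invariance gives $T_\mu T_t=T_\mu$, not $T_tT_\mu=T_\mu$. So the commutation $PT_t=P$ is simply missing.

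The paper closes this gap by a different choice of $\mathfrak{R}$: it takes
\[
\mathfrak{R}=\{T\in C^C:\ T\ Q\text{-nonexpansive},\ T\circ T_t=T\ \text{for all }t\in S,\ \text{and }T\ \text{preserves weakly closed convex }\sc\text{-invariant sets}\}.
\]
With this definition, $PT_t=P$ is built in, and nonemptiness is witnessed \emph{directly} by $T_\mu$ (Theorem~\ref{bshhwj}(i),(iv) give $T_\mu T_t=T_\mu$, and $T_\mu$ preserves $\sc$-invariant weakly closed convex sets since $T_\mu x\in\overline{\mathrm{co}}\{T_tx\}$). This also makes your entire first stage—arguing $\mathrm{Fix}(T_\mu)=\mathrm{Fix}(\sc)$ and applying Theorem~\ref{shlp} to $\{T_\mu\}$—unnecessary; indeed your argument there for the inclusion $\mathrm{Fix}(T_\mu)\subseteq\mathrm{Fix}(\sc)$ invokes ``minimality'' before any minimal element has been produced and does not go through as written. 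The fix is simply to impose $T\circ T_t=T$ for all $t$ in the definition of $\mathfrak{R}$ and drop the preliminary stage.
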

\begin{proof}
Consider the following set:

  $\mathfrak{R}=\{T \in C^{C}: T \; \text{is $Q$-nonexpansive}, T\circ T_{t}=T, \forall t \in S\\
  \text{and every  weakly closed convex $\sc$-invariant subset of $C$ is also $T$ -invariant}\}$.

   Replacing this set  by  $\mathfrak{R}$  in the proof of Theorem \ref{shlp},     we can repeat the argument used in the proof of Theorem \ref{shlp} to get
the desired result.  Of Course,    from Theorem \ref{bshhwj},  we have  $T_{\mu} \in \mathfrak{R}\neq \emptyset$ and $C_{x} $ be as in the proof of Theorem \ref{shlp}, then     $\mathfrak{R} \subseteq \prod_{x \in C} C_{x}$.  As in the proof of Theorem \ref{shlp},  $ \prod_{x \in C} C_{x}$ is   compact  when $C_{x}$ is given the weak topology and $ \prod_{x \in C} C_{x}$ is given the corresponding product topology, and $\mathfrak{R}$ is compact in $\prod_{x \in C} C_{x}$. Using the
preorder $\preceq$ in $\mathfrak{R}$ defined by $T \preceq U$ if $q(Tx-Ty)\leq q(Ux-Uy)$ for each $x,y \in C$ and  $q\in Q$,  there exist    a minimal element $P$  in the following sense:

 \text{if }$ T \in \mathfrak{R} \; \text{and } q(Tx-Ty)\leq q(Px-Py)$ \text{ for each  }  $x,y \in C $ \text{and } $q \in Q$ \\
  \text{then } $q(Tx-Ty)= q(Px-Py)$.

  As in the proof of Theorem \ref{shlp}, we have $Px \in \text{Fix}(\sc)$  for every    $x  \in C $,  $P^{2}=P$ and $PT_{t}=T_{t}P=P$.
\end{proof}

\begin{center}
  \bf{ Acknowledgements}
\end{center}  The authors acknowledge the financial support provided by King Mongkut's University of Technology Thonburi through the “KMUTT 55th Anniversary Commemorative Fund”.  This project was supported by the Theoretical and Computational Science (TaCS) Center under Computational and Applied Science for Smart Innovation Cluster (CLASSIC), Faculty of Science, KMUTT.  
Moreover, the first author is  grateful to  the University of Lorestan for their support.

\end{large}
\end{document}